\newcommand{\cln}[2]{ \left\lceil \frac{#1}{#2} \right\rceil}
\newcommand{\SEQ}[4]{{\langle #1 ; #2 : #3 ; #4 \rangle}}
\theoremstyle{plain}
\newtheorem{theorem}{Theorem}[section]
\newtheorem{lemma}[theorem]{Lemma}
\numberwithin{equation}{section}
\numberwithin{theorem}{section}
\numberwithin{table}{section}
\numberwithin{figure}{section}
\begin{document}


\title{Sums of ceiling functions solve nested recursions}

\author{Rafal Drabek \and Abraham Isgur \and Vitaly Kuznetsov \and Stephen M. Tanny}

\address{Department of Mathematics\\
University of Toronto\\
40 St. George Street\\
Toronto\\
ON M5S 2E4\\
Canada}

\email[Rafal Drabek]{rafal.drabek@utoronto.ca}
\email[Abraham Isgur]{umarovi@gmail.com}
\email[Vitaly Kuznetsov]{v.kuznetsov@utoronto.ca}
\email[Steve Tanny]{tanny@math.utoronto.ca}

\keywords{Nested recursion; Ceiling function; Formal satisfaction; Equivalence class.}

\subjclass[2000]{11B37; 11-04}

\maketitle

\begin{abstract}
It is known that, for given integers $s \geq 0$ and $j>0$, the nested recursion $R(n) = R(n-s-R(n-j))+R(n-2j-s-R(n-3j))$ has a closed form solution for which a combinatorial interpretation exists in terms of an infinite, labeled tree. For $s=0$, we show that this solution sequence has a closed form as the sum of ceiling functions $C(n) = \sum_{i=0}^{j-1}\cln{n-i}{2j}$. Further, given appropriate initial conditions, we derive necessary and sufficient conditions on the parameters $s_1, a_1, s_2$ and $a_2$ so that $C(n)$ solves the nested recursion $R(n) = R(n - s_1 -R(n-a_1)) + R(n-s_2-R(n-a_2))$.
\end{abstract}

\section{Introduction} \label{sec:Intro}
This paper investigates the occurrence of sums of ceiling functions as solutions to nested recursions
of the form
\begin{align}
R(n) = R(n - s_1 -R(n-a_1)) + R(n-s_2-R(n-a_2))
\label{Hn}
\end{align}
with $s_i,a_i$ integers, $a_i > 0$, and specified initial conditions. We adopt the terminology and notation from \cite{ConollyLike}, and write the above recursion as $\SEQ{s_1}{a_1}{s_2}{a_2}$.

The convergence of several recent discoveries has motivated our interest in such solutions. In \cite{BLT} the authors prove that the ceiling function $\cln{n}{2}$ solves the nested recursion $\SEQ{0}{1}{2}{3}$, with initial conditions 1,1,2.  In \cite{ConollyLike}, we vastly generalize this result by deriving necessary and sufficient conditions for the parameters $s_i,a_i$ so that $\cln{n}{2}$ solves the nested recursion $\SEQ{s_1}{a_1}{s_2}{a_2}$ with appropriate initial conditions. \footnote{It is also shown in \cite{ConollyLike} that for every $p \geq 1$, the ceiling function $\cln{n}{2p}$ solves an infinite family of order $p$ nested recursions. In this paper we restrict our attention to the recursion ($\ref{Hn}$), which has order 1.}

In a separate but related direction, in \cite{Rpaper} we solved a natural generalization of the recursion $\SEQ{0}{1}{2}{3}$, namely, $\SEQ{s}{j}{s+2j}{3j}$, with $s,j$ integers and $j$ positive. In so doing, we identified a closed form for the solution sequence that included a nesting of ceiling functions, albeit in a complicated way.

Finally, in \cite{CeilFunSol} we focused once again on the occurrence of certain ceiling function solutions, this time to nested recursions that naturally generalize the nested recursion ($\ref{Hn}$). In this case, for each $q>1$, we derived necessary and sufficient conditions on the parameters of the recurrence so that its solution has the closed form $\cln{n}{q}$ (given appropriate initial conditions).

Inspired by the repeated derivation of classification schemes for ceiling function solutions, we reexamine here the solutions to the family of recursions $\SEQ{s}{j}{s+2j}{3j}$ in \cite{Rpaper} from a ceiling function perspective. We prove that certain of these solutions have the closed form of a sum of ceiling functions. For these solutions, we discover a classification theorem analogous to the result in \cite{ConollyLike}; loosely speaking, we determine all of the possible nested recursions of this ``general" form which share the same solution sequence as $\SEQ{0}{j}{2j}{3j}$.

In the body of this paper we proceed as follows. In Section $\ref{sec:Exp}$, we examine the periodicity properties of sums of ceiling functions. In particular, we derive several useful properties of the ceiling function sum $C(n)$ defined by
\begin{align}
\label{eqn:solution}
C(n) = \sum_{i=0}^{j-1}\cln{n-i}{2j}.
\end{align}
We explain our interest in $C(n)$ in Section $\ref{sec:Main}$, where we prove that $C(n)$ solves $\SEQ{0}{j}{2j}{3j}$. For $s \neq 0$, we show in Section $\ref{sec:Exp}$ that we cannot write the solution to $\SEQ{s}{j}{s+2j}{3j}$ as a sum of ceiling functions. Thus, we must limit our findings to the $s=0$ case.

In Section $\ref{sec:Main}$ we apply the results of Section $\ref{sec:Exp}$ to derive a classification theorem for all the recursions $\SEQ{s_1}{a_1}{s_2}{a_2}$ that have the solution $C(n)$ with appropriate initial conditions.
In other words, we determine completely all the parameters $s_i,a_i$ for which $(\ref{eqn:solution})$ solves the recursion $\SEQ{s_1}{a_1}{s_2}{a_2}$. As a byproduct of this work, it follows that $C(n)$ solves $\SEQ{0}{j}{2j}{3j}$. In Section $\ref{sec:Conc}$ we conclude with some thoughts about potential further work in this general area.

\section{Periodicity and Sums of Ceiling Functions} \label{sec:Exp}

In this section, we examine some key properties of sequences that arise as sums of ceiling functions in general, and of the sum $C(n)$ in particular. We begin by determining which sequences have a closed form as a sum of ceiling functions.

\begin{theorem}
\label{thm:Periodic}
Let $\{a_n\}$ be an integer sequence. We can find a closed form $a_n = c+\sum_{i=1}^k \lceil{q_in+r_i}\rceil$ with $q_i,r_i$ rational if and only if the difference sequence $d_n = a_{n+1} - a_n$ is periodic.
\end{theorem}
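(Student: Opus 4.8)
The plan is to prove both implications by computing the first‑difference sequence of a single ceiling function and exploiting its periodicity. For \emph{necessity}, suppose $a_n = c + \sum_{i=1}^k\lceil q_in+r_i\rceil$ and write each $q_i=u_i/p_i$ in lowest terms with $p_i\ge 1$. Then $d_n=\sum_{i=1}^k g_i(n)$ where $g_i(n)=\lceil q_i(n+1)+r_i\rceil-\lceil q_in+r_i\rceil$, since the constant $c$ drops out of the difference. Because $q_i(n+p_i)=q_in+u_i$ with $u_i\in\mathbb Z$, replacing $n$ by $n+p_i$ adds the same integer $u_i$ to both ceilings in $g_i(n)$ and hence leaves $g_i$ unchanged; so $g_i$ is periodic with period $p_i$, and therefore $d_n$ is periodic with period dividing $\operatorname{lcm}(p_1,\dots,p_k)$.

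For \emph{sufficiency}, suppose $d_n$ has period $P$. The building block is $\lceil(n-i)/P\rceil$, whose first‑difference sequence is easily checked to be the indicator of the congruence $n\equiv i\pmod P$. Hence the sequence $b_n:=a_0+\sum_{i=0}^{P-1}d_i\lceil(n-i)/P\rceil$ has first‑difference sequence $\sum_{i=0}^{P-1}d_i$ times that indicator, which equals $d_n$; moreover $b_0=a_0$ because $\lceil-i/P\rceil=0$ for $0\le i\le P-1$. Since $a_n$ and $b_n$ have equal first differences and agree at one index, $a_n=b_n$ everywhere. It then remains only to rewrite this $\mathbb Z$‑linear combination with all coefficients equal to $1$. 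A positive coefficient $d_i$ is handled by listing $\lceil(n-i)/P\rceil$ exactly $d_i$ times; for a negative coefficient I would use the telescoping identity $\sum_{i=0}^{P-1}\lceil(n-i)/P\rceil=n$ (both sides have constant first difference $1$ and vanish at $n=0$) to write $-\lceil(n-i)/P\rceil=\lceil-n\rceil+\sum_{i'=0,\ i'\ne i}^{P-1}\lceil(n-i')/P\rceil$, using $\lceil-n\rceil=-n$. This turns each negative contribution into a repetition of ceiling functions, and collecting everything yields $a_n=a_0+\sum(\text{ceiling terms})$, i.e.\ the required closed form with $c=a_0$.

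I expect the crux to be exactly this last step in the sufficiency direction: converting a combination of ceiling functions with arbitrary integer coefficients into one with unit coefficients, since a negative coefficient cannot be pushed inside the ceiling naively — $-\lceil x\rceil=\lceil-x\rceil$ fails whenever $x\notin\mathbb Z$ — and the remedy relies on the nonobvious identity $\sum_{i=0}^{P-1}\lceil(n-i)/P\rceil=n$ together with the observation that $-n$ is itself a (trivial) ceiling function. One small point to settle at the outset is the index set of $\{a_n\}$: the statement is cleanest for two‑sided sequences, where "periodic" is literal; for a one‑sided sequence one reads "periodic" strictly (not "eventually periodic") and uses a full period of representatives $d_{n_0},\dots,d_{n_0+P-1}$ in the construction above, which then pins down $a_n$ over its whole range from a single value.
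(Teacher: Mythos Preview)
Your argument is correct and follows essentially the same route as the paper: in the forward direction both of you observe that each $\lceil q_in+r_i\rceil$ has periodic first difference with period the denominator of $q_i$; in the reverse direction both of you reconstruct $a_n$ as an anchor value plus $\sum_i d_i\,\lceil(n-i)/P\rceil$, using that the first difference of $\lceil(n-i)/P\rceil$ is the indicator of $n\equiv i\pmod P$. The one place you go further is the final rewriting step: the paper stops at $a_1+\sum_{i=1}^{p}d_i\lceil(n-i)/p\rceil$ and tacitly regards this as the desired closed form, whereas you notice that the theorem as stated admits no integer multipliers in front of the ceilings and supply the identity $\sum_{i=0}^{P-1}\lceil(n-i)/P\rceil=n$ together with $-n=\lceil-n\rceil$ to absorb any negative $d_i$. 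That extra care fills a small gap the paper leaves implicit.
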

\begin{proof}
First, suppose $a_n = c+\sum_{i=1}^k \lceil{q_in+r_i}\rceil$. The difference sequence of $a_n$ is the sum of the difference sequences of $\lceil q_in+r_i \rceil$, each of which is periodic (with period the denominator of $q_i$), and their sum is thus periodic (with period a divisor of the lowest common denominator of the $q_i, i=1,...,k$).

Now suppose $d_n$ is periodic with period $p$. Then consider $b_n = a_1+\sum_{i=1}^pd_i\cln{n-i}{p}$. First, note that $b_1=a_1$. Next, observe that $\cln{n-i}{p}$ has a difference sequence consisting of $0$ for $n \not \equiv i$ (mod p), $1$ for $n \equiv i$ (mod p). Thus, $d_i\cln{n-i}{p}$ has a difference sequence of 0 for $n \not \equiv i$ (mod p), $d_i$ for $n \equiv i$ (mod p), and so the difference sequence of $b_n$ is just $b_{n+1}-b_n = d_i$ where $0<i\leq p$ and $i \equiv n$ (mod p). But since $d_n$ is periodic with period $p$, this means the difference sequence of $b_n$ is $d_n$, and so $a_n$ and $b_n$ have the same first element and the same difference sequence and are therefore equal.
\end{proof}

In \cite{Rpaper}, we derived a closed form for the solution to the recurrence $\SEQ{s}{j}{s+2j}{3j}$. This formula clearly shows that the solution has a periodic difference sequence if and only if $s=0$. Thus, in the remainder of this paper, we restrict ourselves to examining the solution to the $s=0$ case.

An interesting result related to Theorem $\ref{thm:Periodic}$ follows:

\begin{theorem}
If the solution sequence to a nested recursion has a periodic difference sequence, then this same sequence also solves a non-nested recursion.
\end{theorem}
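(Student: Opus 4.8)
The plan is to use periodicity of the difference sequence to extract a linear recurrence with constant coefficients satisfied by the solution sequence, and then to observe that any such recurrence is by definition non-nested. Suppose $\{a_n\}$ is the solution sequence of the nested recursion in question, and that its difference sequence $d_n = a_{n+1}-a_n$ is periodic with period $p$. The first step is to telescope: for every $n$ in the domain of the sequence,
\begin{align*}
a_{n+p} - a_n \;=\; \sum_{i=n}^{n+p-1} d_i \;=\; \sum_{i=1}^{p} d_i,
\end{align*}
where the second equality holds because $d_n$ has period $p$. Denote this common value by $D$; it is independent of $n$. Equivalently, on each residue class modulo $p$ the sequence $\{a_n\}$ is an arithmetic progression with common difference $D$.

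The second step is to homogenize this into a recurrence of the desired form. From $a_{n+2p}-a_{n+p} = D = a_{n+p}-a_n$ we obtain $a_{n+2p} - 2a_{n+p} + a_n = 0$ for every $n$ in the domain; that is, $\{a_n\}$ solves the non-nested linear recursion $R(n) = 2R(n-p) - R(n-2p)$, with initial conditions given by its own first $2p$ terms. Since no $R$ occurs inside the argument of another $R$, this recursion is non-nested, as required. (If an additive constant is permitted one may instead use the shorter $R(n) = R(n-p) + D$; alternatively, one could appeal to Theorem~\ref{thm:Periodic} to write $a_n = c + \sum_i \lceil q_i n + r_i\rceil$ and read off a recurrence from the least common denominator of the $q_i$. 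The telescoping argument is the most direct.)

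The only point requiring care is not mathematical depth but precision about what it means to ``solve'' a recursion: one must fix an explicit finite set of initial conditions and a domain of validity, so that $\{a_n\}$ genuinely satisfies the exhibited non-nested recursion and the statement is not vacuous. In particular, if the difference sequence is merely eventually periodic --- periodic for $n \ge n_0$ --- the same computation yields $R(n) = 2R(n-p) - R(n-2p)$ valid for $n \ge n_0 + 2p$, now taking $a_1,\dots,a_{n_0+2p-1}$ as the initial conditions, and the conclusion stands. I expect this bookkeeping, together with confirming that the recursion produced is non-degenerate, to be the main (and rather minor) obstacle.
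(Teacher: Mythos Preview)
Your proof is correct. Both you and the paper arrive at a non-nested recursion of the form $A(n) = A(n-p) + D$ for a constant $D$, but the routes differ slightly. The paper invokes Theorem~\ref{thm:Periodic} to write $a_n = c + \sum_{i=1}^k \lceil q_i n + r_i\rceil$, puts the $q_i = b_i/q$ over a common denominator $q$, and then reads off $A(n) = A(n-q) + \sum_i b_i$. You instead telescope directly from the periodicity of $d_n$ to obtain $a_{n+p}-a_n = \sum_{i=1}^{p} d_i = D$, which is the same conclusion without the detour through the ceiling-function representation; you even note the paper's route as an alternative. Your approach is more elementary and self-contained, while the paper's reuses machinery already in place. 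The homogenized version $R(n) = 2R(n-p) - R(n-2p)$ is a nice touch if one prefers a homogeneous linear recurrence, though the paper is content with the inhomogeneous first-order form.
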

\begin{proof}
By Theorem $\ref{thm:Periodic}$, such a solution sequence has a closed form $c+\sum_{i=1}^k \lceil{q_in+r_i}\rceil$; without loss of generality, rewrite the $q_i$ to have a common denominator so that $q_i = b_i/q$ with $b_i,q$ integers. Then the same solution sequence solves the recursion $A(n) = A(n-q)+\sum_{i=1}^kb_i$.
\end{proof}

Based on the structure of the solution to $\SEQ{0}{j}{2j}{3j}$ found in \cite{Rpaper}, we can observe that the sequence ($\ref{eqn:solution}$)
solves $\SEQ{0}{j}{2j}{3j}$. We will prove this fact in the next section, but first we prove here two lemmas which simplify computations involving $C(n)$.

\begin{lemma}
For any $n$,$d \in \mathbb{Z}$, $C(n+2jd) = C(n)+jd$.
\label{lm:C}
\end{lemma}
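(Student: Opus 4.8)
The plan is to work directly from the definition $C(n) = \sum_{i=0}^{j-1}\cln{n-i}{2j}$ and exploit the elementary identity $\cln{m+2jd}{2j} = \cln{m}{2j} + d$, valid for any integer $m$ and any integer $d$. This identity is just the statement that adding a multiple of the modulus to the numerator of a ceiling (with denominator dividing that multiple) shifts the value by the corresponding integer, which follows immediately from $\lceil x + d \rceil = \lceil x \rceil + d$ for integer $d$.

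Concretely, I would write
\begin{align}
C(n+2jd) = \sum_{i=0}^{j-1}\cln{n+2jd-i}{2j} = \sum_{i=0}^{j-1}\left(\cln{n-i}{2j} + d\right) = \sum_{i=0}^{j-1}\cln{n-i}{2j} + jd = C(n) + jd,
\end{align}
where the middle equality applies the shift identity to each of the $j$ summands (with the same $d$ each time), and the last equality collects the $j$ copies of $d$. That is essentially the whole argument.

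There is no real obstacle here; the only thing worth being careful about is justifying the per-term shift identity $\cln{n+2jd-i}{2j} = \cln{n-i}{2j} + d$ cleanly, since $d$ may be negative — but $\lceil x + d\rceil = \lceil x \rceil + d$ holds for all integers $d$, positive, negative, or zero, so nothing special is needed. It may be worth stating this one-line fact explicitly before the main computation so the display is self-contained. I would also note in passing (or leave implicit) that the hypothesis $n, d \in \mathbb{Z}$ is used only to guarantee $d$ is an integer; the numerators $n - i$ need not be handled individually beyond that.
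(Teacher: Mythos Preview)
Your argument is correct and is essentially identical to the paper's own proof: both expand the definition of $C(n+2jd)$, apply the per-term identity $\cln{n+2jd-i}{2j}=\cln{n-i}{2j}+d$, and collect the $j$ copies of $d$. Your added remark justifying $\lceil x+d\rceil=\lceil x\rceil+d$ for all integers $d$ is a harmless elaboration but not a departure from the paper's method.
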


\begin{proof}
We have
$C(n+2jd) = \sum_{i=0}^{j-1}\cln{n+2jd-i}{2j} =
 \sum_{i=0}^{j-1}(\cln{n-i}{2j}+d)= \sum_{i=0}^{j-1}\cln{n-i}{2j}+jd=C(n)+jd$ and this completes the proof.
\end{proof}

This lemma shows that if we know the values of $C(n)$ for $2j$ consecutive values of $n$, then we can easily compute the rest of the sequence. In the next lemma, we find the values of $C(n)$ for $0 \leq n \leq 2j-1$.

\begin{lemma}
$C(n)=n$ for $1 \leq n \leq j-1$ and $C(n)=j$ for $j \leq n \leq 2j$.
\label{lm:freq}
\end{lemma}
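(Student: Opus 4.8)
The plan is to evaluate each summand $\cln{n-i}{2j}$ directly for $i = 0, 1, \dots, j-1$, using the elementary observation that for an integer $m$ one has $\cln{m}{2j} = 0$ exactly when $1 - 2j \le m \le 0$, and $\cln{m}{2j} = 1$ exactly when $1 \le m \le 2j$. Throughout the lemma $1 \le n \le 2j$ and $0 \le i \le j-1$, so every numerator $n-i$ lies in the interval $[\,n-(j-1),\, n\,] \subseteq [\,2-j,\, 2j\,] \subseteq [\,1-2j,\, 2j\,]$; hence each term of the sum contributes either $0$ or $1$, and the whole computation reduces to counting how many of the numerators $n-i$ are positive.

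First I would handle the range $1 \le n \le j-1$. Here $n - i \ge 1$ precisely when $i \le n-1$, i.e.\ for the $n$ indices $i = 0, 1, \dots, n-1$; for each of these, $1 \le n - i \le n \le j-1 < 2j$, so $\cln{n-i}{2j} = 1$. For the remaining indices $i = n, n+1, \dots, j-1$ we have $2 - j \le n - (j-1) \le n - i \le 0$, so $\cln{n-i}{2j} = 0$. Summing over all $i$ gives $C(n) = n$.

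Next I would treat $j \le n \le 2j$. For every $i$ with $0 \le i \le j-1$ we have $1 \le n - (j-1) \le n - i \le n \le 2j$, hence $\cln{n-i}{2j} = 1$; there are exactly $j$ such terms, so $C(n) = j$. The argument is entirely routine, and no step is a genuine obstacle; the only thing to watch is the bookkeeping that places each numerator $n-i$ into the correct sub-interval ($\le 0$ versus $\ge 1$), together with the degenerate case $j = 1$, where the first assertion is vacuous and the second reduces to $C(1) = C(2) = 1$, immediate from the definition of $C$.
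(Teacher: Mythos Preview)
Your argument is correct and follows essentially the same route as the paper: bound the numerators $n-i$, determine for which $i$ the ceiling $\cln{n-i}{2j}$ equals $0$ versus $1$, and count. The only cosmetic differences are that you state the underlying fact about $\cln{m}{2j}$ a bit more explicitly at the outset and add a remark on the vacuous $j=1$ case.
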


\begin{proof}
Let $n\in \{1,2,...,j-1\}$ and $i \in \{0,1,2,...,j-1\}$.  Then $-j+2 \leq n-i \leq j-1$. Therefore, we have that $\cln{n-i}{2j}$ is $1$ when $n>i$ and it vanishes otherwise. Hence, we have $C(n) = \sum_{i=0}^{j-1}\cln{n-i}{2j} = \sum_{i=0}^{n-1}1 + \sum_{i=n}^{j-1}0 = n$. Now let $n\in \{j,j+1,...,2j\}$ and $i \in \{0,1,2,...,j-1\}$. Then $1 \leq n-i \leq 2j$, which implies that $\cln{n-i}{2j}=1$. Hence,

\begin{align*}
C(n) = \sum_{i=0}^{j-1}\cln{n-i}{2j} = \sum_{i=0}^{j-1}1 = j
\end{align*}

as required.

\end{proof}


\section{Finding All Recursions $\SEQ{s_1}{a_1}{s_2}{a_2}$ Solved By $C(n)$} \label{sec:Main}

In this section we determine all of the recursions $R(n) = \SEQ{s_1}{a_1}{s_2}{a_2}$ solved by $C(n)$ when given appropriate initial conditions. In so doing we make use of the idea of ``formal satisfaction." We say an infinite sequence formally satisfies a recursion if the recursive formula is well-defined and true on that sequence for all integers. By contrast, a sequence is generated as the unique solution to a recursion and a set of $c$ specific initial conditions if for all $n>c$, the recursion allows us to calculate the value of the solution sequence at $n$ by referencing only terms with indices less than $n$.

A simple example will clarify this distinction. The recursion $S(n) = S(S(n+1))$ is formally satisfied by the sequence $S(n)= 1$ for all $n$. However, for any positive integer $c$, if we are given the initial conditions $S(1) = S(2) = \ldots = S(c) = 1$, we cannot determine the value of $S(c+1)$ because the recursion requires we know the value of $S(c+2)$.

Thus, in general, formal satisfaction does not imply generation as an infinite solution sequence. But for the particular case we are dealing with, namely, the recursion $R(n) = \SEQ{s_1}{a_1}{s_2}{a_2}$ and the sequence $C(n)$, formal satisfaction does imply generation as an infinite solution sequence. To see why, note that $C(n)$ asymptotically approaches $n/2$, so $n-s_1-C(n-a_{1})$ and $n-s_2-C(n-a_{2})$ also asymptotically approach $n/2$. Furthermore, as long as $a_i>0$, for large enough $n$ the recursion for $R(n)$ refers only to prior positive terms and can thus be generated given sufficiently many appropriate initial conditions.

Although it might at first seem like an additional complication, the idea of formal satisfaction simplifies many of our proofs - in many cases, we can most easily prove that a recursion generates $C(n)$ as an infinite solution sequence by proving that $C(n)$ formally satisfies the recursion. Thus, we now proceed with a theorem classifying all recursions $R(n) = \SEQ{s_1}{a_1}{s_2}{a_2}$ that $C(n)$ formally satisfies.

\begin{theorem}
$C(n) = \sum_{i=0}^{j-1}\cln{n-i}{2j}$ formally satisfies the nested recursion $\SEQ{s_1}{a_1}{s_2}{a_2}$ if and only if the following conditions hold:

\begin{align*}
s_1,s_2 \equiv 0 \bmod{j} \tag{i}\\
a_1,a_2 \equiv j \bmod{2j} \tag{ii}\\
2(s_1+s_2) = a_1 + a_2 \tag{iii}
\end{align*}
\label{thm:Main}
\end{theorem}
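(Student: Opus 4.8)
The statement is an "if and only if," so I would split it into two directions. For the ("if") direction, assume conditions (i)–(iii) and show $C(n)$ formally satisfies $\SEQ{s_1}{a_1}{s_2}{a_2}$ for all integers $n$. The key tool is Lemma~\ref{lm:C}, which says $C$ is "almost linear": $C(n+2jd) = C(n) + jd$. Write $s_1 = j\sigma_1$, $s_2 = j\sigma_2$, $a_1 = j(2\alpha_1+1)$, $a_2 = j(2\alpha_2+1)$ using (i) and (ii). The plan is to compute each nested argument modulo the period $2j$ and reduce everything to Lemma~\ref{lm:freq}. For instance, $C(n - a_1)$ differs from $C(n)$ by a controlled amount depending on $n \bmod 2j$; then $n - s_1 - C(n-a_1)$ gets pinned down, and applying $C$ to it — again via Lemma~\ref{lm:C} to strip off multiples of $2j$ and Lemma~\ref{lm:freq} to evaluate the residue — yields a closed expression. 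Summing the two terms and invoking condition (iii) to cancel the linear parts should reproduce $C(n)$. Because $C$ has period-$2j$ difference sequence, it suffices to verify the identity for $n$ in a single block of $2j$ consecutive integers (say $1 \le n \le 2j$), which makes this a finite check once the algebra is set up; one should be slightly careful about the boundary residues where the two formulas of Lemma~\ref{lm:freq} meet.

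\textbf{The converse.} For the ("only if") direction, assume $C(n)$ formally satisfies the recursion and derive (i)–(iii). Here the strategy is to exploit the asymptotics $C(n) \sim n/2$ together with the exact periodic structure. First, from $C(n) = C(n-s_1-C(n-a_1)) + C(n-s_2-C(n-a_2))$, replace $n$ by $n + 2jd$ and use Lemma~\ref{lm:C} on every occurrence; comparing the growth rates of both sides forces a linear relation among $s_1, s_2, a_1, a_2$ — this should give condition (iii) (the factor $2$ arising because each inner $C$ contributes roughly half). Next, to get the congruence conditions (i) and (ii), I would evaluate the formal identity at specific residue classes of $n$ modulo $2j$ and use Lemma~\ref{lm:freq} to get exact (not just asymptotic) values; the periodicity of the difference sequence of $C$ means that if $s_i$ or $a_i$ had the "wrong" residue, the inner arguments $n - s_i - C(n - a_i)$ would land in a residue class where the two sides disagree, producing a contradiction. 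Concretely, one shows that the argument $n - a_i$ must be such that $C(n-a_i)$ tracks the right half-integer, which forces $a_i \equiv j \pmod{2j}$, and similarly the offset $s_i$ must preserve divisibility by $j$.

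\textbf{Main obstacle.} The harder direction is the converse, and within it the extraction of the congruence conditions (i) and (ii): the asymptotic argument cleanly delivers (iii), but pinning down residues requires carefully tracking how $C$ behaves on arguments that themselves depend on $C$, i.e., a genuine "nesting" computation rather than a linear substitution. The main technical care is in managing the case analysis over $n \bmod 2j$ and the near-linear error terms of $C$ simultaneously; I expect the cleanest route is to first establish (iii), substitute it back to eliminate one parameter, and only then do the residue analysis, so that the remaining degrees of freedom are minimal. A secondary subtlety, in the ("if") direction, is verifying that the recursion is genuinely \emph{formally} satisfied — i.e.\ well-defined and true for \emph{all} integers $n$, including negative ones — which again is handled by periodicity reducing everything to a finite verification, but one must make sure Lemma~\ref{lm:freq}'s range $0 \le n \le 2j$ together with Lemma~\ref{lm:C} actually covers every residue with no gaps.
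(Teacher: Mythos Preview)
Your overall structure is sound and genuinely different from the paper's. The paper introduces an equivalence relation on parameter vectors $\SEQ{s_1}{a_1}{s_2}{a_2}$ (generated by $(s_1,a_1)\mapsto(s_1+cj,a_1+2cj)$, the symmetric move on $(s_2,a_2)$, and $(s_1,s_2)\mapsto(s_1-2ej,s_2+2ej)$), shows that both the conditions (i)--(iii) and the difference $h(n,y)=C(n-s_1-C(n-a_1))+C(n-s_2-C(n-a_2))-C(n)$ are invariant under it, and shows every class meets $\mathbb{Z}\times\{0,\dots,2j-1\}^3$. Sufficiency then reduces to verifying the single recursion $\SEQ{0}{j}{j}{j}$ on $0\le n\le 4j-1$, and necessity to showing that any formally-satisfied representative in that box must be $\SEQ{0}{j}{j}{j}$. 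You skip this machinery in favour of direct substitution and asymptotics; in particular your asymptotic extraction of (iii) (via $C(n)-n/2$ being bounded) is correct and is not how the paper obtains that relation.

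Two concrete issues. First, a minor one: it does not suffice to check a block of $2j$ consecutive $n$. Shifting $n$ by $2j$ shifts $C(n-a_i)$ by $j$, hence the argument of each outer $C$ by only $j$, and $C(m+j)-C(m)$ is not constant; the correct period of $h(n,y)$ is $4j$, and the paper checks $0\le n\le 4j-1$. Second, and more serious: your plan for extracting (i) and (ii) is where the real work lies, and as written it is only a hope. The difficulty you underestimate is that the recursion has \emph{two} summands, so a ``wrong'' residue for $(s_1,a_1)$ can in principle be compensated by $(s_2,a_2)$; you cannot argue one term at a time that ``the inner argument lands in a bad residue class.'' The paper's necessity proof is long for exactly this reason: it uses slowness of $C$ and the jump $C(0)\to C(1)$ to pin down one summand (forcing, after a possible swap, $0<a_1\le j$ and $s\in\{0\}\cup(j,2j)$), then exploits the constant stretches of $C$ on $[j,2j]$ and $[-j,0]$ to rule out $a_1<j$ and $s\ne 0$, and only afterwards runs a parallel analysis on the second summand. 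Your sketch does not yet engage with this two-summand coupling, and until it does the converse direction has a genuine gap.
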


Notice that for $j=1$, the conditions on the parameters reduce to the characterization of all nested recursions $\SEQ{s_1}{a_1}{s_2}{a_2}$ formally satisfied by the ceiling function $\cln{n}{2}$ (derived in \cite{ConollyLike}).

To show that the conditions listed above suffice for $C(n)$ to formally satisfy $\SEQ{s_1}{a_1}{s_2}{a_2}$, we adapt the proof technique used in \cite{CeilFunSol} to prove an analogous result classifying all nested recursions formally satisfied by $\cln{n}{q}$. The basic elements of our approach follow: first, we establish a natural equivalence relation on the set of all recursions of the form $\SEQ{s_1}{a_1}{s_2}{a_2}$. Next, we show that if the sequence ($\ref{eqn:solution}$) formally satisfies one element of an equivalence class, then it satisfies every element of that equivalence class. Then we prove that every equivalence class has a representative in the set $\mathbb{Z}\times S \times S \times S$, where $S = \{0,1,2,...,2j-1\}$. Finally, we demonstrate that if $C(n)$ formally satisfies $\SEQ{s_1}{a_1}{s_2}{a_2}$ for $4j$ consecutive values of $n$, then it does so for all $n$. Putting all these facts together, we conclude by directly verifying that when the conditions listed above hold, then $C(n)$ satisfies $\SEQ{s_1}{a_1}{s_2}{a_2}$ for $0 \leq n \leq 4j-1$.

We now proceed with a series of five lemmas.
To establish a natural equivalence relation on the set of all recursions of the form $\SEQ{s_1}{a_1}{s_2}{a_2}$, we treat $\SEQ{s_1}{a_1}{s_2}{a_2}$ as a vector in $\mathbb{Z}^4$, denoted by $y$. Then we define the equivalence relation $\sim$ on the set of vectors $y$:
\begin{align*}
\SEQ{s_1}{a_1}{s_2}{a_2} \sim \SEQ{s_1 + cj}{a_1 + 2cj}{s_2}{a_2} \tag{a}\\
\SEQ{s_1}{a_1}{s_2}{a_2} \sim \SEQ{s_1}{a_1}{s_2 + dj}{a_2 + 2dj} \tag{b}\\
\SEQ{s_1}{a_1}{s_2}{a_2} \sim \SEQ{s_1 - 2ej}{a_1}{s_2 + 2ej}{a_2} \tag{c}
\end{align*}
where $c$, $d$, $e \in \mathbb{Z}$. Our first lemma shows that if any element of an equivalence class satisfies conditions (i)-(iii) then every element of that equivalence class satisfies those conditions.

\begin{lemma}
Let $y$ satisfy (i)-(iii). If $y \sim y'$, then $ y'$ satisfies (i)-(iii).
\label{lm:CondPreserve}
\end{lemma}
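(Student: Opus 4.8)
The plan is to verify directly that each of the three generating moves (a), (b), (c) of the equivalence relation $\sim$ preserves conditions (i)–(iii), and then invoke symmetry and transitivity. Since $\sim$ is generated by these moves (together with their inverses, which are the same moves with $c,d,e$ negated), and since the conditions are clearly preserved under the reflexive case, it suffices to check that if $y = \SEQ{s_1}{a_1}{s_2}{a_2}$ satisfies (i)–(iii) and $y'$ is obtained from $y$ by a single application of (a), (b), or (c), then $y'$ satisfies (i)–(iii). Transitivity then handles arbitrary chains.

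For move (a), $y' = \SEQ{s_1+cj}{a_1+2cj}{s_2}{a_2}$. Condition (i): $s_1+cj \equiv s_1 \equiv 0 \pmod j$ and $s_2$ is unchanged, so (i) holds. Condition (ii): $a_1 + 2cj \equiv a_1 \equiv j \pmod{2j}$ and $a_2$ is unchanged, so (ii) holds. Condition (iii): $2((s_1+cj)+s_2) = 2(s_1+s_2) + 2cj = (a_1+a_2) + 2cj = (a_1+2cj) + a_2$, so (iii) holds. Move (b) is identical after swapping the roles of the indices $1$ and $2$. For move (c), $y' = \SEQ{s_1-2ej}{a_1}{s_2+2ej}{a_2}$. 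Condition (i): $s_1 - 2ej \equiv s_1 \equiv 0 \pmod j$ and $s_2 + 2ej \equiv s_2 \equiv 0 \pmod j$, so (i) holds. Condition (ii): $a_1$ and $a_2$ are unchanged, so (ii) holds. Condition (iii): $2((s_1-2ej)+(s_2+2ej)) = 2(s_1+s_2) = a_1 + a_2$, and the right-hand side is unchanged, so (iii) holds.

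Having checked all three moves, I would conclude: if $y \sim y'$ then there is a finite chain $y = y_0, y_1, \dots, y_m = y'$ in which each step is one of the moves (a), (b), (c) or an inverse of one (which, as noted, is again of this form); by induction on $m$ using the single-step computations above, $y'$ satisfies (i)–(iii). There is no real obstacle here — the lemma is a routine verification — but the one point worth being careful about is confirming that the inverse relations are themselves instances of (a)–(c) (so that $\sim$ really is symmetric and the chain argument applies), and that condition (iii), being the only one coupling all four coordinates, survives each move; the computations above show it does, essentially because every move adds a quantity to one $s_i$ and a balancing quantity to the corresponding $a_i$, or shifts mass between $s_1$ and $s_2$ in a way that leaves $s_1+s_2$ fixed.
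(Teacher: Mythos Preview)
Your proposal is correct and takes essentially the same approach as the paper: verify directly that each of the three generating relations (a), (b), (c) preserves conditions (i)--(iii), with the computations matching the paper's almost line for line. Your additional remarks about inverses and chains make explicit why checking single moves suffices, a point the paper leaves implicit, but the substance is identical.
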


\begin{proof}
It suffices to verify the statement of the theorem for relations (a), (b)
and (c) separately. In the following, let $y = \SEQ{s_1}{a_1}{s_2}{a_2}$ and $ y' = \SEQ{s_1'}{a_1'}{s_2'}{a_2'}$.

We first check that equivalence under (a) preserves (i)-(iii). Assume $ y' = \SEQ{s'_1}{a'_1}{s'_2}{a'_2} = \SEQ{s_1 + cj}{a_1 + 2cj}{s_2}{a_2}$ for some $c \in \mathbb{Z}$. Then since
$s_1 \equiv 0 \bmod{j}$ and
$a_1 \equiv j \bmod{2j}$ by assumption, we have that
$
s_1 + cj \equiv 0 + cj \equiv 0 \bmod{j}$ and
$a_1 + 2cj \equiv j + 2cj \equiv j \bmod{2j}.$ Furthermore, since $2(s_1 + s_2) = a_1 + a_2$, we have $2(s_1 + cj + s_2) = 2(s_1 + s_2) + 2cj = a_1 + a_2 + 2cj = (a_1 + 2cj) + a_2$. It follows that $s'_1$, $a'_1$, $s'_2$ and $a'_2$ satisfy (i)-(iii).

The argument for (b) is identical and therefore omitted.

Finally, we verify that equivalence under (c) preserves (i)-(iii). Let $ y' = \SEQ{s'_1}{a'_1}{s'_2}{a'_2} = \SEQ{s_1 - 2ej}{a_1}{s_2 + 2ej}{a_2}$ for some $e \in \mathbb{Z}$. By assumption  $s_1 \equiv 0 \bmod{j}$ and $s_2 \equiv 0 \bmod{j}$, so we have $
s_1 - 2ej \equiv 0 - 2ej \equiv 0 \bmod{j},
s_2 + 2ej \equiv 0 + 2ej \equiv 0 \bmod{j}$, and $2(s_1 -2ej + s_2 + 2ej) = 2(s_1 + s_2) = a_1 + a_2.$

This shows that $s'_1$, $a'_1$, $s'_2$ and $a'_2$ satisfy (i)-(iii) as required, thereby completing the proof.
\end{proof}

Now we show that if the sequence ($\ref{eqn:solution}$) formally satisfies one element of an equivalence class, then it satisfies every element of that equivalence class.
Define the difference function  $h(n, y) = C(n-s_1-C(n-a_1))+C(n-s_2-C(n-a_2))-C(n)$. Observe that for a fixed $y$, the sequence ($\ref{eqn:solution})$ formally satisfies $y$ if and only if $h(n,y) = 0$ for all $n \in \mathbb{Z}$. We have the following lemma.

\begin{lemma}
If $y \sim y'$ then $ h(n, y)= h(n, y').$
\label{lm:hInvariant}
\end{lemma}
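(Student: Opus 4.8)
The plan is to verify the invariance $h(n,y) = h(n,y')$ separately for each of the three generating relations (a), (b), (c), since any equivalence $y \sim y'$ is obtained by composing finitely many of these elementary moves, and equality of the difference function is clearly transitive. Throughout I would write $y = \SEQ{s_1}{a_1}{s_2}{a_2}$ and recall that
\[
h(n,y) = C(n - s_1 - C(n-a_1)) + C(n - s_2 - C(n-a_2)) - C(n),
\]
so that the two summands are symmetric and only the first summand changes under (a), only the second under (b).

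For relation (a), set $y' = \SEQ{s_1 + cj}{a_1 + 2cj}{s_2}{a_2}$. The second summand and the $-C(n)$ term are literally unchanged, so it suffices to show the first summand is unchanged, i.e.
\[
C\bigl(n - (s_1 + cj) - C(n - a_1 - 2cj)\bigr) = C\bigl(n - s_1 - C(n - a_1)\bigr).
\]
Here is where Lemma~\ref{lm:C} does all the work: $C(n - a_1 - 2cj) = C(n-a_1) - cj$ by that lemma (with $d = -c$), so the inner argument becomes $n - s_1 - cj - C(n-a_1) + cj = n - s_1 - C(n-a_1)$, and the two sides are identical. The argument for (b) is word-for-word the same with the roles of the first and second summands interchanged, so I would simply say so rather than repeating it.

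For relation (c), set $y' = \SEQ{s_1 - 2ej}{a_1}{s_2 + 2ej}{a_2}$. Now both summands change, but the inner arguments $n - a_1$ and $n - a_2$ of the innermost $C$'s are untouched, so $C(n-a_1)$ and $C(n-a_2)$ are the same as before. The first summand becomes $C\bigl((n - s_1 - C(n-a_1)) + 2ej\bigr)$, which by Lemma~\ref{lm:C} equals $C(n - s_1 - C(n-a_1)) + ej$; similarly the second summand becomes $C(n - s_2 - C(n-a_2)) - ej$. The two $\pm ej$ corrections cancel, and again $h(n,y') = h(n,y)$.

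I do not anticipate a genuine obstacle here: the lemma is essentially a direct consequence of the quasi-periodicity of $C$ recorded in Lemma~\ref{lm:C}, and the whole point of the way the equivalence relation was defined (the coefficients $cj$ versus $2cj$ in (a) and (b), and the $2ej$ balanced against $-2ej$ in (c)) is precisely that these shifts are invisible to $C$ modulo the $+jd$ term, which then cancels. The only mild care needed is bookkeeping with signs when invoking Lemma~\ref{lm:C} with negative $d$, and noticing that in (a) and (b) the shift sits \emph{inside} an already-nested $C$, so one applies the lemma to the inner $C$ first and watches the additive constant propagate through the outer $C$ via a second application of the lemma.
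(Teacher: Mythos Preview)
Your proposal is correct and follows essentially the same approach as the paper: verify invariance separately for the three generators (a), (b), (c), using Lemma~\ref{lm:C} to simplify the inner $C$ in cases (a) and (b) and the outer $C$'s in case (c), with the $\pm ej$ contributions cancelling in the latter. One small remark: in your closing commentary you mention a ``second application'' of Lemma~\ref{lm:C} to the outer $C$ in cases (a) and (b), but as your own computation shows the $-cj$ and $+cj$ already cancel at the level of the outer argument, so only one application is needed there---this does not affect the correctness of the proof.
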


\begin{proof}
As before, it suffices to prove this lemma separately for relations (a), (b) and (c).

First, we verify (a) preserves $h$. Let
$y =  \SEQ{s_1}{a_1}{s_2}{a_2}$ and
$y' = \SEQ{s_1 + cj}{a_1 + 2cj}{s_2}{a_2}$ for some $c \in \mathbb{Z}$.
Note that by Lemma \ref{lm:C} we have $
C(n-s_1-cj-C(n-a_1-2cj)) =
C(n-s_1-cj-(C(n-a_1)-cj)) =
C(n-s_1-cj+cj-(C(n-a_1))=
C(n-s_1-C(n-a_1)).
$ Hence, $h(n, y') = C(n-s_1-cj-C(n-a_1-2cj))+C(n-s_2-C(n-a_2))-C(n) =
 C(n-s_1-C(n-a_1))+C(n-s_2-C(n-a_2))-C(n) =
h(n, y)
$

The same argument applies to confirm that (b) preserves $h$; we omit the details.

We check that (c) preserves $h$. Assume
$y =  \SEQ{s_1}{a_1}{s_2}{a_2}$ and
$y' = \SEQ{s_1-2ej}{a_1}{s_2+2ej}{a_2}$ for some $e \in \mathbb{Z}$.

Applying Lemma \ref{lm:C}, we get $
h(n, y') = C(n-s_1+2ej-C(n-a_1))+C(n-s_2-2ej-C(n-a_2))-C(n) =
 C(n-s_1-C(n-a_1))+ej+C(n-s_2-C(n-a_2))-ej-C(n) =
h(n, y)$ and this completes the proof.
\end{proof}

Now we show that every equivalence class has a representative in the set $\mathbb{Z}\times S \times S \times S$, where $S = \{0,1,2,...,2j-1\}$. As we will see later, this result, together with the following lemma and conditions (i)-(iii), will reduce the verification of formal satisfaction to a finite number of confirmatory calculations.

\begin{lemma}
For every $y \in \mathbb{Z}^4$, there exists $ y' \in \mathbb{Z}\times S \times S \times S$ such that $ y \sim y'$.
\label{lm:repBouned}
\end{lemma}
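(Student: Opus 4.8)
The plan is to use the three generating relations (a), (b), (c) to systematically reduce an arbitrary $y = \SEQ{s_1}{a_1}{s_2}{a_2} \in \mathbb{Z}^4$ into the target region $\mathbb{Z} \times S \times S \times S$, one coordinate at a time, taking care that each reduction step does not disturb the coordinates already placed in $S$. The key observation is that relation (a) changes $s_1$ by multiples of $j$ and $a_1$ by multiples of $2j$ while leaving $s_2, a_2$ untouched; relation (b) does the analogous thing to $(s_2, a_2)$; and relation (c) shifts $s_1$ and $s_2$ by (opposite) multiples of $2j$ without touching $a_1, a_2$.

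First I would use relation (a): since $a_1 \mapsto a_1 + 2cj$ ranges over a full residue class mod $2j$, choose $c$ so that the new first-argument offset lies in $S = \{0, 1, \dots, 2j-1\}$. This simultaneously shifts $s_1$ by $cj$, but we do not care about the value of $s_1$ yet. Next, apply relation (b) in exactly the same way to force the new $a_2$ into $S$; this shifts $s_2$ but leaves $a_1$ (already in $S$) alone. Now both $a$-coordinates are in $S$ and will stay there, because the only remaining move we need, relation (c), does not affect them.

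Then I would apply relation (c) to bring $s_1$ into $S$: since $s_1 \mapsto s_1 - 2ej$ ranges over a residue class mod $2j$, pick $e$ so the new $s_1 \in S$. This move also changes $s_2$ by $+2ej$, but at this stage $s_2$ is still unconstrained, so that is fine. Finally, to handle $s_2$, note that relation (b) with $d$ a multiple of $2$, say $d = 2d'$, sends $s_2 \mapsto s_2 + 2d'j$ and $a_2 \mapsto a_2 + 4d'j$ — but that would move $a_2$ out of $S$. So instead I would use relation (b) directly: $s_2 \mapsto s_2 + dj$, $a_2 \mapsto a_2 + 2dj$. To keep $a_2 \in S$ we are forced to take... this is where care is needed. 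The cleanest fix is to reduce $s_2$ into $S$ \emph{before} fixing $a_2$, i.e. reorder: first use (c) to put $s_1 \in S$, then use (b) to put $s_2$ into the right residue mod $2j$ (adjusting $a_2$), and only then use (a) one final time to restore $a_1$ if (c) had disturbed it — but (c) does not touch $a_1$, so no restoration is needed. Concretely: apply (a) to get $a_1 \in S$; apply (c) to get $s_1 \in S$ (this leaves $a_1, a_2$ fixed); then apply (b) to get $a_2 \in S$ (this changes $s_2$ only); finally observe $s_2$ may still be arbitrary, so — and here is the real subtlety — we need a move that fixes $s_2$ without disturbing $a_1, a_2, s_1$.

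The main obstacle is precisely that last coordinate: none of (a), (b), (c) individually shifts $s_2$ alone. The resolution is to compose (b) with itself, or rather to note that applying (b) with parameter $d$ and then (a) is not available for $s_2$; instead, the trick (used in \cite{CeilFunSol} for the analogous result) is that relation (b) lets us shift $(s_2, a_2)$ along the line of slope $2$, and since $a_2$ is already pinned into $S$, we have already used up our freedom there. Therefore I would argue that we simply do not need $s_2 \in S$ independently: rather, reorder the whole reduction so that $s_2$ is fixed first. That is: use (b) to place $a_2 \in S$ (changing $s_2$); then use (c) to place $s_2 \in S$ (changing $s_1$, leaving $a_1, a_2$ fixed); then use (a) to place $a_1 \in S$ (changing $s_1$); we are left with $s_1$ possibly outside $S$, which is allowed since the target is $\mathbb{Z} \times S \times S \times S$. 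Thus the correct order is (b), (c), (a), and after these three moves $a_1, a_2, s_2 \in S$ while $s_1 \in \mathbb{Z}$ is unconstrained, completing the proof. I would write this out as: choose $d$ so $a_2 + 2dj \in S$; then choose $e$ so $s_2 + dj - 2ej \in S$ (possible since this is an arbitrary integer shifted by multiples of $2j$, hitting every residue mod $2j$); then choose $c$ so $a_1 + 2cj \in S$; the resulting $y'$ lies in $\mathbb{Z} \times S \times S \times S$ and $y \sim y'$ by transitivity of $\sim$.
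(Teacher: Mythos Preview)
Your final argument---apply (b) to reduce $a_2$ into $S$, then (c) to reduce $s_2$ into $S$, then (a) to reduce $a_1$ into $S$, leaving $s_1$ unconstrained in $\mathbb{Z}$---is correct and is exactly the sequence of reductions the paper uses. The exploratory false starts preceding it could simply be deleted, and the minor sign slip (relation (c) gives $s_2 \mapsto s_2 + 2ej$, so the condition should read $s_2 + dj + 2ej \in S$ rather than $-2ej$) is harmless since $e$ ranges over all integers.
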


\begin{proof}
Consider an arbitrary $ y =  \SEQ{s_1}{a_1}{s_2}{a_2} \in \mathbb{Z}^4$.
Note that by the division algorithm $a_2 = 2jq + a'_2$ for some $q \in \mathbb{Z}$ and $a'_2 \in S$. Then, we apply (b) to obtain $y =  \SEQ{s_1}{a_1}{s_2}{2jq+a'_2} \sim \SEQ{s_1}{a_1}{s_2-jq}{2jq+a'_2-2jq} = \SEQ{s_1}{a_1}{s'_2}{a'_2}$.

As before, $s'_2 = 2jc+s''_2$ for some $c \in \mathbb{Z}$ and $s''_2 \in S$.
Now, we apply (c) to get $\SEQ{s_1}{a_1}{s'_2}{a'_2} = \SEQ{s_1}{a_1}{2jc+s''_2}{a'_2} \sim \SEQ{s_1+2jc}{a_1}{2jc+s''_2-2jc}{a'_2} = \SEQ{s'_1}{a_1}{s''_2}{a'_2}$.

Finally, we use the fact that $a_1 = 2jd +a'_1$, where $d \in \mathbb{Z}$ and $a'_1 \in S$, and (a) to get $\SEQ{s'_1}{a_1}{s''_2}{a'_2} = \SEQ{s'_1}{2jd + a'_1}{s''_2}{a'_2} \sim \SEQ{s'_1-jd}{2jd + a'_1-2jd}{s''_2}{a'_2} = \SEQ{s''_1}{a'_1}{s''_2}{a'_2}$.

Therefore, $ y \sim y' = \SEQ{s''_1}{a'_1}{s''_2}{a'_2}$, where $a'_1$,$s''_2$,$a'_2 \in S$ and $s''_1 \in \mathbb{Z}$.

\end{proof}

Currently, to check that $C(n)$ formally satisfies some recursion corresponding to $y$, we have to check that $h(n, y) = 0$ for each $n \in \mathbb{Z}$. Our next lemma remedies this situation by reducing to finitely many $n$.

\begin{lemma}
For a fixed $y$ and any integer $d$, $h(n, y) = h(n + 4jd, y)$.
\label{lm:nBounded}
\end{lemma}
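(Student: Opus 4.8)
The plan is to show $h(n, y) = h(n+4jd, y)$ by applying Lemma~\ref{lm:C} repeatedly to each of the four occurrences of $C$ in the definition of $h$. Recall that $h(n, y) = C(n-s_1-C(n-a_1))+C(n-s_2-C(n-a_2))-C(n)$, so evaluating at $n + 4jd$ replaces $n$ by $n+4jd$ throughout. By induction it suffices to treat $d = 1$ (and $d = -1$, which is symmetric), so I would prove $h(n+4j, y) = h(n, y)$.

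First I would handle the inner ceiling sums: since $4j = 2j \cdot 2$, Lemma~\ref{lm:C} gives $C((n+4j) - a_i) = C(n - a_i) + 2j$ for $i = 1, 2$. Substituting this into the arguments of the outer $C$'s, the first summand of $h(n+4j, y)$ becomes
\begin{align*}
C\bigl((n+4j) - s_1 - C((n+4j)-a_1)\bigr) = C\bigl(n + 4j - s_1 - C(n-a_1) - 2j\bigr) = C\bigl(n + 2j - s_1 - C(n-a_1)\bigr).
\end{align*}
Now apply Lemma~\ref{lm:C} once more, this time with the shift $2j$ (i.e. $d = 1$ in the lemma), to pull out a $+j$: this equals $C(n - s_1 - C(n-a_1)) + j$. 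The second summand is handled identically, contributing another $+j$, while the standalone term satisfies $C(n + 4j) = C(n) + 2j$ by Lemma~\ref{lm:C}. Adding everything up, $h(n+4j, y) = \bigl[C(n-s_1-C(n-a_1)) + j\bigr] + \bigl[C(n-s_2-C(n-a_2)) + j\bigr] - \bigl[C(n) + 2j\bigr] = h(n, y)$, since the four extra $j$'s cancel. The general case follows by iterating, or equivalently by replacing $2j$ with $2jd$ and $4j$ with $4jd$ throughout and noting $2 \cdot (jd) + \text{(contributions from the two summands)} = 2jd + jd + jd - 2jd$ balances.

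The computation is entirely routine once one sees the right bookkeeping; the only mild subtlety — and the step I would be most careful about — is that Lemma~\ref{lm:C} must be invoked \emph{twice} on each summand (once to simplify the inner argument of $C$, once more to peel off the outer shift), and one must check that the net additive contributions from the two summands exactly cancel the contribution from the term $C(n+4jd)$. This is why the factor $4j$ (rather than $2j$) is the correct period: the inner substitution consumes one factor of $2j$ and the outer one consumes another, so $h$ is invariant under shifts by $4j$ but not in general under shifts by $2j$. With Lemma~\ref{lm:nBounded} in hand, together with Lemmas~\ref{lm:CondPreserve}, \ref{lm:hInvariant}, and \ref{lm:repBouned}, the verification of Theorem~\ref{thm:Main} reduces to a finite check of $h(n, y) = 0$ for $0 \le n \le 4j - 1$ on a bounded representative.
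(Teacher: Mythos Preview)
Your argument is correct and is essentially the same as the paper's: both apply Lemma~\ref{lm:C} first to the inner $C(n+4jd-a_i)$ to extract a $2jd$, then again to the outer $C$ to extract a $jd$, and finally check that the two $jd$'s from the summands cancel the $2jd$ from $C(n+4jd)$. The only cosmetic difference is that the paper carries a general $d$ throughout rather than reducing to $d=\pm 1$ and iterating; neither the reduction nor the induction is needed, since Lemma~\ref{lm:C} already handles arbitrary integer shifts.
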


\begin{proof}
Fix $y =  \SEQ{s_1}{a_1}{s_2}{a_2}$ and an integer $d$.
Applying  Lemma \ref{lm:C}, we have $
C(n+4jd-s_1-C(n+4jd-a_1)) =
C(n+4jd-s_1-(C(n-a_1)+2jd)) =
C(n+2jd-s_1-C(n-a_1)) =
C(n-s_1-C(n-a_1)) + jd
$ and similarly $C(n+4jd-s_2-C(n+4jd-a_2)) = C(n-s_2-C(n-a_2)) + jd$. Then,
we calculate $
h(n+4jd, y) = C(n+4jd-s_1-C(n+4jd-a_1)) + C(n+4jd-s_2-C(n+4jd-a_2)) - C(n+4jd) =
 C(n-s_1-C(n-a_1)) + jd + C(n-s_2-C(n-a_2)) + jd - C(n) -2jd =
 C(n-s_1-C(n-a_1)) + C(n-s_2-C(n-a_2)) - C(n) =
 h(n, y)$ and this completes the proof.
\end{proof}

Lemma \ref{lm:nBounded} has a key consequence: to confirm that $h(n, y) = 0$ for all $n \in \mathbb{Z}$, it suffices to verify that $h(n, y) = 0$ for $0 \leq n \leq 4j-1$.

The above results provide all the necessary tools to show that conditions (i)-(iii) suffice for the sequence $(\ref{eqn:solution})$ to formally satisfy $ y$.

\begin{lemma}
Let $ y$ satisfy conditions (i)-(iii). Then the sequence $(\ref{eqn:solution})$ formally satisfies $y$.
\label{lm:sufficient}
\end{lemma}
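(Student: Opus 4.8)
The plan is to combine the four preceding lemmas to reduce the claim to a finite calculation, and then carry out that calculation. By Lemma~\ref{lm:repBouned}, the given $y$ is equivalent to some $y' \in \mathbb{Z} \times S \times S \times S$; by Lemma~\ref{lm:CondPreserve}, $y'$ still satisfies (i)--(iii); and by Lemma~\ref{lm:hInvariant}, $h(n,y) = h(n,y')$ for all $n$, so it suffices to prove formal satisfaction for $y'$. Thus I would assume from the outset that $a_1, a_2 \in S = \{0,1,\dots,2j-1\}$ and $s_1 \in \mathbb{Z}$ (note $s_2$ need not be reduced, but condition (iii) will pin it down). Condition (ii) forces $a_1 = a_2 = j$, since $j$ is the unique element of $S$ congruent to $j \bmod 2j$. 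Then (iii) gives $2(s_1 + s_2) = 2j$, i.e.\ $s_1 + s_2 = j$; combined with (i), which says $s_1, s_2 \equiv 0 \bmod j$, we get $\{s_1, s_2\}$ is essentially $\{0, j\}$ (up to the freedom already absorbed; more precisely $s_1 = kj$, $s_2 = (1-k)j$ for some integer $k$, but one more application of relation (c) or a direct appeal to Lemma~\ref{lm:hInvariant} lets me take $s_1 = 0$, $s_2 = j$). So the representative recursion to verify is exactly $\SEQ{0}{j}{j}{j}$ — wait, that has $a_2 = j$, not $3j$; this is fine, since $\SEQ{0}{j}{2j}{3j}$ is equivalent to this reduced form under relation (b) with $d=1$.

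Having reduced to a single concrete recursion, the remaining task is to show $h(n,y') = 0$ for $0 \le n \le 4j-1$, which by the remark following Lemma~\ref{lm:nBounded} establishes $h(n,y') \equiv 0$ and hence formal satisfaction. For the reduced recursion $R(n) = R(n - R(n-j)) + R(n - j - R(n-j))$, I would plug in $C$ and split the range $0 \le n \le 4j-1$ into the residue-type intervals dictated by Lemma~\ref{lm:freq} and Lemma~\ref{lm:C}: on $1 \le n \le j-1$ use $C(n) = n$; on $j \le n \le 2j$ use $C(n) = j$; for $2j < n \le 4j-1$ write $n = n_0 + 2j$ with $C(n) = C(n_0) + j$ and reduce to the first two cases. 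In each subinterval, I compute the inner arguments $n - C(n-j)$ and $n - j - C(n-j)$ explicitly (these will again land in small, controlled ranges where Lemma~\ref{lm:freq} applies), evaluate $C$ there, and check the sum equals $C(n)$.

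The main obstacle — really the only nontrivial part — is bookkeeping the case split cleanly: for $n$ near the boundaries $n = j$, $n = 2j$, $n = 0$, and $n = 3j$, the value $C(n-j)$ jumps, and the inner arguments $n - C(n-j)$, $n-j-C(n-j)$ can straddle the boundary between the "$C(m) = m$" regime and the "$C(m) = j$" regime, so one must be careful that the argument of the outer $C$ is evaluated under the correct formula (possibly after shifting by $2j$ via Lemma~\ref{lm:C} when it falls outside $[1,2j]$). I expect roughly four or five subcases, each a one-line arithmetic identity once the argument ranges are nailed down. A cleaner alternative, which I would actually prefer to write up, is to bypass the explicit reduction of $s_1, s_2$ and instead verify $h(n, \SEQ{0}{j}{2j}{3j}) = 0$ directly on $0 \le n \le 4j-1$ using Lemmas~\ref{lm:C} and~\ref{lm:freq}: since conditions (i)--(iii) together with the equivalence relations collapse every admissible $y$ to this one class, a single verification suffices, and it makes the connection to the recursion $\SEQ{0}{j}{2j}{3j}$ from~\cite{Rpaper} transparent.
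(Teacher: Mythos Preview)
Your proposal is correct and follows essentially the same route as the paper: reduce via Lemmas~\ref{lm:repBouned}--\ref{lm:hInvariant} and conditions (i)--(iii) to the single representative $\SEQ{0}{j}{j}{j}$, then invoke Lemma~\ref{lm:nBounded} to restrict to $0\le n\le 4j-1$ and finish with a four-case check using the explicit values of $C$ from Lemmas~\ref{lm:C} and~\ref{lm:freq}. Two small remarks: in the paper's representative set $\mathbb{Z}\times S\times S\times S$ it is $s_2$ (not $s_1$) that already lies in $S$, which immediately forces $s_2'\in\{0,j\}$ and hence $s_1'\in\{j,0\}$ without the extra appeal to relation (c) you sketch; and your ``cleaner alternative'' of verifying $\SEQ{0}{j}{2j}{3j}$ directly is perfectly valid but is not what the paper does---the paper checks $\SEQ{0}{j}{j}{j}$, whose inner arguments stay in the narrower range $[-j,2j]$ and so make the four subcases slightly tidier.
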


\begin{proof}
Observe that by Lemma \ref{lm:repBouned} there exists $y' = \SEQ{s'_1}{a'_1}{s'_2}{a'_2}$ such that $ y' \sim y$ and $ y' \in \mathbb{Z} \times S \times S \times S$. Notice that, by Lemma \ref{lm:CondPreserve}, $s_1'$,$a_1'$,$s_2'$ and $a_2'$ also satisfy conditions (i)-(iii). Then, $a'_1 = a'_2 = j$ and $s'_2$ is 0 or $j$. If $s'_2$ is 0, then from condition (iii) it follows that $s'_1 = j$. Alternatively, if $s'_2 = j$ then condition (iii) implies that $s'_1 = 0$. Either way (switching the order of the summands if needed), $ y'$ corresponds to the recursion $R(n) = R(n - R(n-j)) + R(n-j-R(n-j))$. Therefore, without loss of generality we assume that  $y' = \SEQ{0}{j}{j}{j}$.

By Lemma \ref{lm:hInvariant}, to show that $h(n, y) = 0$ for all integers $n$, it suffices to show that $h(n, y') = 0$ for all integers $n$. Furthermore, Lemma \ref{lm:nBounded} shows that we only need to show that $h(n, y') = 0$ for $0 \leq n \leq 4j-1$. By Lemmas \ref{lm:C} and $\ref{lm:freq}$ we have

\begin{align*}
C(n) =
\begin{cases}
0,   & -j \leq n \leq -1 \\
n,   & 0 \leq n \leq j-1 \\
j,   & j \leq n \leq 2j-1 \\
n-j, & 2j \leq n \leq 3j-1 \\
2j,  & 3j \leq n \leq 4j-1
\end{cases}
\end{align*}

We need to consider several cases.
First, suppose that $0 \leq n \leq j-1$. Then $C(n-j)=0$ since $-j \leq n-j \leq -1$. Therefore, $h(n,y') = C(n-C(n-j)) + C(n-j-C(n-j)) - C(n) = C(n) + C(n-j) - C(n) = 0$, as required.

Next consider the case when $j \leq n \leq 2j-1$. Hence, $C(n-j)=n-j$ as $0 \leq n-j \leq j-1$. Thus, $h(n, y') = C(n-C(n-j)) + C(n-j-C(n-j)) - C(n) =
            C(j) + C(0) - C(n) =
            j + 0 - j = 0$.

Now let $2j \leq n \leq 3j-1$. In this case, $C(n-j)=j$ and $C(n-2j)=n-2j$ since $j \leq n-j \leq 2j-1$ and $0 \leq n-2j \leq j-1$. Hence, $
h(n, y') = C(n-C(n-j)) + C(n-j-C(n-j)) - C(n) =
            C(n-j) + C(n-2j) - C(n) =
            j + n-2j - n+j = 0$.

Finally, suppose that $3j \leq n \leq 4j-1$. Then $C(n-j)=n-2j$ since $2j \leq n-j \leq 3j-1$. Then $h(n, y') = C(n-C(n-j)) + C(n-j-C(n-j)) - C(n)
           = C(2j) + C(j) - C(n) =
            j + j - 2j = 0 $ as required. We conclude that $h(n, y')=0$ for $0 \leq n \leq 4j-1$ and this completes the proof of the lemma.

\end{proof}

We conclude the proof of Theorem \ref{thm:Main} by showing the necessity of conditions (i)-(iii) for $(\ref{eqn:solution}$) to formally satisfy $ y$.

\begin{lemma}
Let the sequence $(\ref{eqn:solution}$) formally satisfy $ y$. Then $ y$ satisfies conditions (i)-(iii).
\label{lm:Necess}
\end{lemma}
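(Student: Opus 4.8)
The plan is to keep the hypothesis in formal form throughout: $h(n,y)=0$ for all $n\in\mathbb{Z}$, that is, $C(n)=C(f_1(n))+C(f_2(n))$ with $f_i(n):=n-s_i-C(n-a_i)$. First I would record a few elementary facts about $C$ that come straight out of Lemmas~\ref{lm:C} and~\ref{lm:freq}: the difference sequence of $C$ is $2j$-periodic, equal to $1$ when its index is $\equiv 0,1,\dots,j-1\pmod{2j}$ and $0$ otherwise; hence each $f_i$ has increments in $\{0,1\}$, is nondecreasing, and (being unbounded in both directions, since $f_i(n+2j)=f_i(n)+j$) is surjective onto $\mathbb{Z}$. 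Writing $g(m):=C(m+j)-C(m)$, the function $g$ is $2j$-periodic with $g(m)=j-m$ for $0\le m\le j$ and $g(m)=m-j$ for $j\le m\le 2j$; in particular $g\ge 0$, $g(m)=0$ iff $m\equiv j\pmod{2j}$, and consecutive values of $g$ differ by exactly $\pm1$. I would also record the ``slope-one'' identity $C(m)+C(m+j)=m+j$ for all $m$ (both sides increase by $1$ per unit step and agree at $m=0$); this identity is the engine of the argument.

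Next I would extract what can be obtained from a \emph{weakened} form of the hypothesis. Subtracting $h(n+2j,y)=0$ from $h(n,y)=0$ and using $f_i(n+2j)=f_i(n)+j$ and $C(n+2j)=C(n)+j$ gives the telescoped identity $g(f_1(n))+g(f_2(n))=j$ for all $n$. Differencing this in $n$: when $f_i$ steps up, $g(f_i(n))$ changes by $\pm1$, and otherwise it is unchanged, so $f_1$ and $f_2$ must step up at exactly the same $n$; since $f_i$ steps up precisely for $n\equiv a_i+j,\dots,a_i+2j-1\pmod{2j}$, equating these two length-$j$ arcs of $\mathbb{Z}/2j\mathbb{Z}$ forces $a_1\equiv a_2\pmod{2j}$. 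Then $C(n-a_1)-C(n-a_2)$ is constant, so $\delta:=f_1(n)-f_2(n)$ is a constant, and the telescoped identity reads $g(f_2(n)+\delta)+g(f_2(n))=j$; surjectivity of $f_2$ promotes this to $g(r+\delta)+g(r)=j$ for all $r$, so (taking $r=0$) $g(\delta)=0$ and $\delta\equiv j\pmod{2j}$. I would then use relation~(a) together with Lemmas~\ref{lm:hInvariant} and~\ref{lm:CondPreserve} to reduce to the case $a_1=a_2=:a$, noting that this preserves both $\delta$ and the identity $h\equiv 0$; write $\delta=s_2-s_1=j+2j\mu$.

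The last step must return to the \emph{full} hypothesis, since the telescoped identity is genuinely too weak to finish. With $f_2(n)=f_1(n)-\delta$ the recursion is $C(f_1(n))+C(f_1(n)-\delta)=C(n)$; applying $C(x-\delta)=C(x-j)-j\mu$ and then the slope-one identity in the form $C(y)+C(y-j)=y$, everything collapses to $f_1(n)=C(n)+j\mu$, equivalently $C(n)+C(n-a)=n-s_1-j\mu$ for all $n$. Differencing in $n$ says that for every $n$ exactly one of $n$ and $n-a$ is $\equiv 0,\dots,j-1\pmod{2j}$, which means the arc $\{a,\dots,a+j-1\}$ is the complement $\{j,\dots,2j-1\}$ in $\mathbb{Z}/2j\mathbb{Z}$, i.e.\ $a\equiv j\pmod{2j}$; this is condition~(ii). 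Evaluating $C(n)+C(n-a)=n-s_1-j\mu$ at $n=0$ (with $a=j+2j\sigma$, $C(0)=0$, $C(-a)=-j\sigma$) yields $s_1=j(\sigma-\mu)\equiv0\pmod j$; then $s_2=s_1+\delta=j(\sigma+\mu+1)\equiv0\pmod j$ and $2(s_1+s_2)=2j(2\sigma+1)=2a=a_1+a_2$, giving conditions~(i) and~(iii). A final application of Lemma~\ref{lm:CondPreserve} transfers (i)--(iii) from the reduced recursion back to $y$.

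The main obstacle, I expect, is not any single computation but the temptation to stop at the clean telescoped identity: $g(f_1(n))+g(f_2(n))=j$ controls only $a_1\equiv a_2$ and $\delta$ modulo $2j$ and is blind to the actual residues of the $a_i$ and to the $s_i$, so one is forced back to the full recursion. The crucial move is recognizing that the slope-one identity $C(m)+C(m+j)=m+j$ linearizes $C(f_1(n))+C(f_1(n)-\delta)=C(n)$ into $f_1(n)=C(n)+j\mu$; after that the remaining conditions fall out by differencing and one evaluation. A secondary nuisance throughout is that $C$ is not periodic (it grows like $n/2$), so every manipulation requires peeling off ``$2j$-blocks'' via Lemma~\ref{lm:C}.
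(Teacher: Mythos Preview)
Your argument is correct and takes a genuinely different route from the paper's. The paper first reduces to a representative in $\mathbb{Z}\times S\times S\times S$ via Lemma~\ref{lm:repBouned} and then carries out a long case analysis, tracking where $C$ is constant versus where it increases, to pin the representative down to $\SEQ{0}{j}{j}{j}$ (after a lengthy elimination of the possibilities $0<a_1<j$, $j<s<2j$, $j<a_2<2j$, etc.). You instead extract two structural identities---the telescoped relation $g(f_1(n))+g(f_2(n))=j$ and the slope-one identity $C(m)+C(m+j)=m+j$---and let them do all the work: the first forces $a_1\equiv a_2\pmod{2j}$ and $\delta\equiv j\pmod{2j}$, and after reducing to $a_1=a_2$ the second linearizes the recursion to $f_1(n)=C(n)+j\mu$, from which (i)--(iii) drop out by one differencing and one evaluation. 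Your approach is shorter and more conceptual, and it isolates exactly why the weak (telescoped) consequence is insufficient and where the full hypothesis must be invoked; the paper's approach, by contrast, is more hands-on and self-contained, never needing to discover the slope-one identity, but at the cost of several pages of case-chasing. Both are valid; yours generalizes more readily.
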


\begin{proof}
By Lemma \ref{lm:repBouned}, $ y \sim  y'$ for some $ y' \in \mathbb{Z} \times S \times S \times S$, where $S = \{0,1,2,...,2j-1\}$. Furthermore, by Lemma \ref{lm:CondPreserve}, $ y$ satisfies (i)-(iii) if and only if $ y'$ does. Therefore, without loss of generality, we may assume that $ y \in \mathbb{Z} \times S \times S \times S$. We now prove that $ y = \SEQ{0}{j}{j}{j}$; since $\SEQ{0}{j}{j}{j}$ clearly satisfies conditions (i)-(iii), this will complete the proof of this lemma.

By assumption, $(\ref{eqn:solution}$) formally satisfies $ y$, so $C(n) = C(n-s_1-C(n-a_1)) + C(n-s_2-C(n-a_2))$ for all $n$. By the Euclidean division algorithm, $s_1 = 2jk + s$ for some $k \in \mathbb{Z}$ and $s \in S$. Therefore, by Lemma \ref{lm:C}, $C(n) = C(n-s-C(n-a_1)) + C(n-s_2-C(n-a_2))-jk$. Further, as in Lemma $\ref{lm:sufficient}$, we will use the following values of $C(n)$, which follow from Lemmas \ref{lm:C} and \ref{lm:freq}:

\begin{align*}
C(n) =
\begin{cases}
n+j   &-2j \leq n \leq -j-1 \\
0,    &-j \leq n \leq 0 \\
n,    & 1 \leq n \leq j-1 \\
j,    & j \leq n \leq 2j
\end{cases}
\end{align*}

In particular, observe that for $n$ satisfying $-2j \leq n < 2j$, $C(n)$ is constant precisely on the intervals $[-j,0]$ and $[j,2j]$ only; that is, if $C(n) = C(n+1)$ for some $n$ with $-2j \leq n < 2j$, then $n$ must satisfy $-j \leq n < 0$ or $j \leq n < 2j$. We repeatedly use this observation.

First, we demonstrate that one of the summands $C(n-s-C(n-a_1))$ or $C(n-s_2-C(n-a_2))$ is in fact $C(n-C(n-j))$; that is, we show that either $s=0,a_1=j$ or $s_2=0,a_2=j$.
Note that the sequences defined by $C(n-s-C(n-a_1))$ and $C(n-s_2-C(n-a_2))$ are both slow, that is, their forward differences equal either $0$ or $1$. This follows directly from the fact that $C(n)$ itself is slow, which in turn follows from Lemmas \ref{lm:freq} and \ref{lm:C}. Our main use of this fact is to note that if $C(n)$ stays constant, both of its summands must stay constant, and if $C(n)$ increases by 1, then exactly one of its summands must increase by $1$.

Since $C(0) = 0$ and $C(1) = 1$, one of the summands must have increased by $1$;  without loss of generality we may assume it was $C(n-s-C(n-a_1))$, interchanging the summands if needed. Hence, $C(1-s-C(1-a_1)) = 1+C(-s-C(-a_1))$. Since $C(n)$ is slow, either $C(-a_1)=C(1-a_1)$, or $C(-a_1)+1=C(1-a_1)$. In the latter case, we would have $1-s-C(1-a_1)=-s-C(-a_1)$, contradicting $C(1-s-C(1-a_1)) = 1+C(-s-C(-a_1))$. Thus, $C(-a_1)=C(1-a_1)$ and so $C(1-s-C(-a_1))=1+C(-s-C(-a_1))$.

Since $C(-a_1) = C(1-a_1)$ and $a_1 \in S$, it must be the case that $C(-a_1) = 0$ and $0<a_1\leq j$ (see the listing of values of $C(n)$ above). Furthermore, since $C(-a_1) = 0$, it follows (by substituting into the last equation in the previous paragraph) that $C(1-s) = 1+C(-s)$. This implies that either $s=0$ or $j<s<2j$.

Summarizing the above results, we have the following restrictions: $0<a_1\leq j$, and either $s=0$ or $j<s<2j$.

Next, we show that $a_1 = j$. If not, then $0<a_1<j$. By the list of values of $C(n)$ above, $C(j+a_1)=C(j+a_1+1)= \ldots = C(2j) = j$. Therefore, since $C(n)$ is constant as $n$ ranges from $j+a_1$ to $2j$, both of its summands must be constant on the same range. In particular, $C(n-s-C(n-a_1))$ must stay constant as $n$ ranges from $j+a_1$ to $2j$. Thus $C(j+a_1-s-C(j+a_1-a_1)) = \ldots = C(2j-s-C(2j-a_1))$. Since $C(j+a_1-a_1) = \ldots = C(2j-a_1) = j$, we have that $C(j+a_1-s-j) = ... = C(2j-s-j)$, or, simplifying, $C(a_1-s) = C(a_1+1-s) = ... = C(j-s)$. This implies that $s \neq 0$ since otherwise $C(j) = C(a_1)$, where $a_1 < j$. Hence, we must have $j<s<2j$.

But observe that $C(n)$ also remains constant and equal to 0 as $n$ ranges from $-j$ to $0$. Applying the same argument as above with all the terms shifted back by 2j, we can conclude that $C(-j+a_1-s) = C(-j+1+a_1-s) = ... = C(-s)$. But this is impossible, since for $j<s<2j$ we have $C(-s) = -s+j$ and $C(-s-1) = -s-1+j$ (see the list of values of $C(n)$ above). Therefore, we must have $a_1 = j$.

Summarizing our current situation, we have $a_1=j$ and either $s=0$ or $j<s<2j$.

Now we show that $s=0$. If not, then $j<s<2j$. As an immediate consequence, $C(j-1-s-C(j-1-a_1)) = C(j-1-s)=0=C(j-s)=C(j-s-C(j-a_1))$ where the first and last equalities come from substituting $a_1=j$, and the middle two equalities hold because $j-s$ and $j-1-s$ lie between $-j$ and $0$, and thus $C(j-s)=C(j-1-s)=0$. Since $C(j-1) + 1 = C(j)$ and $C(j-1-s-C(j-1-a_1)) = C(j-s-C(j-a_1))$, the second summand of $C(n)$ must be the one to increase as $n$ changes from $j-1$ to $j$, so $C(j-1-s_2-C(j-1-a_2)) + 1 = C(j-s_2-C(j-a_2))$. Therefore, we now turn our attention to the term $C(n-s_2-C(n-a_2))$.

We have that $C(j-1-s_2-C(j-1-a_2)) + 1 = C(j-s_2-C(j-a_2))$. Thus, the arguments $j-1-s_2-C(j-1-a_2)$ and $j-s_2-C(j-a_2)$ must be different, which requires $C(j-1-a_2)=C(j-a_2)$. Since $0\leq a_ 2 < 2j$, we have that $-j < j - a_2 \leq j$, which together with $C(j-1-a_2) = C(j-a_2)$ implies that $-j < j-a_2 \leq 0$ (see the list of values for $C(n)$ above). Thus, $C(j-1-a_2)=C(j-a_2)=0$. So we can conclude $C(j-a_2) = C(j-1-a_2) = 0$ and $j \leq a_2 < 2j$. Furthermore, this implies that $C(j-s_2) = C(j-1-s_2)+1$, so $0 \leq s_2 < j$. Consider first the case that $a_2 \neq j$, so that $j<a_2<2j$. Since $C(n)$ is constant as $n$ ranges from $j$ to $a_2$, its second summand must be constant on this range so $C(j-s_2-C(j-a_2)) = ... = C(a_2-s_2-C(a_2-a_2))$. However, since $C(j-a_2) = ... = C(a_2-a_2) = 0$, this tells us that $C(j-s_2) = ... = C(a_2-s_2)$, which is only possible if $s_2 = 0$. Next, since $C(n)$ is also constant as $n$ ranges from $-j$ to $-2j+a_2$, we can apply the preceding argument shifted back by $2j$ terms to conclude that $C(-j+j) = ... = C(-2j+a_2+j)$, which we rewrite as $C(0) = ... = C(a_2-j)$. This is impossible since $C(a_2-j) \neq C(0)$. Thus we conclude that the case $j<a_2<2j$ cannot occur, and we now let $a_2=j$.

Recall that we are still working under the assumption that $j<s<2j$, and we have shown that $a_1=j$ and now $a_2=j$, and that $0 \leq s_2 < j$. Note that $0=C(0) = C(-s-C(-j)) + C(-s_2 - C(-j)) -jk$. Since $C(-j) = 0$, this reduces to $0 = C(-s) + C(-s_2)-jk$. However, $0 \leq s_2 < j$ and consulting the list of values for $C(n)$, we see $C(-s_2) = 0$. This implies $C(-s) = jk$. Since $j<s<2j$ we have $0 > C(-s) > -j$, giving a contradiction. Therefore, we conclude that $s=0$.

We now have proved that $s = 0$ and $a_1=j$, so $C(n) = C(n-C(n-j)) + C(n-s_2-C(n-a_2)) -jk$. We still don't know anything about $s_2$ or $a_2$.

A key property of $C(n-C(n-j))$  is that it remains constant as $n$ ranges from $2j$ to $3j$. Indeed, by the listing of values of $C(n)$ above, if $2j \leq n \leq 3j$, then $C(n-j)=j$, so $n-C(n-j)=n-j$, so $C(n-C(n-j))=j$, again by the listing of values of $C(n)$.

By Lemmas \ref{lm:freq} and \ref{lm:C}, $C(n+1)=C(n)+1$ for $2j \leq n < 3j$. But the first summand $C(n-C(n-j))$ remains constant on this range, so it follows that the second summand $C(n-s_2-C(n-a_2))$ must be increasing on this range. That is, for $2j \leq n < 3j$, we have $1+C(n-s_2-C(n-a_2))=C(n+1-s_2-C(n+1-a_2))$. This in turn implies that for $n$ in this range, $n-s_2-C(n-a_2) \neq n+1-s_2-C(n+1-a_2)$, which simplifies to $C(n+1-a_2) \neq 1+C(n-a_2)$. Since $C(n)$ increases only by 0 or 1, it must be the case that $C(n+1-a_2)=C(n-a_2)$ for all $n$ in the range $2j \leq n < 3j$. By consulting the list of values of $C(n)$ above, we see that the sequence $2j-a_2,2j+1-a_2,\ldots,3j-a_2$ must begin with $(2z+1)j$ for some integer $z$ (so that $C(n)$ is constant on the next $j$ values). But $a_2$ lies in the range $0 \leq a_2 < 2j$, so the only possibility is $2j-a_2=j$, or $a_2=j$.

In the previous paragraph, we showed that for $2j \leq n < 3j$, we have $1+C(n-s_2-C(n-a_2))=C(n+1-s_2-C(n+1-a_2))$. But $a_2=j$,  so $j \leq n-a_2<2j$ and therefore by the list of values of $C(n)$, $C(n-a_2)=C(n+1-a_2)=j$. Thus, $1+C(n-s_2-j)=C(n+1-s_2-j)$ for all $n$ in the range $2j \leq n < 3j$. Consulting the list of values for $C(n)$ above, we see that $2j-s_2-j$ must be $2zj$ for some integer $z$, to ensure that $C(n)$ increases on the next $n$ values. Then since $0 \leq s_2 < 2j$, $2j-s_2-j=0$ is the only possibility so $s_2=j$.

Thus, we have that $C(n) = C(n-C(n-j))+C(n-j-C(n-j))-kj$. By substituting $n=j$ we immediately deduce that $k=0$, so $C(n) = C(n-C(n-j))+C(n-j-C(n-j))$ as desired. This completes the proof.
\end{proof}

Together, Lemmas \ref{lm:sufficient} and \ref{lm:Necess} prove Theorem \ref{thm:Main}. Further, observe that this also proves that $C(n)$ formally satisfies $R(n) = R(n-R(n-j)) + R(n-2j-R(n-3j))$ since $\SEQ{0}{j}{j}{j} \sim \SEQ{0}{j}{2j}{3j}$.

\section{Concluding Remarks} \label{sec:Conc}
A wide variety of nested recursions have solutions exhibiting either periodic or ``periodic-like" behaviour. In \cite{Golomb1990}, Golomb observed that if the one-term nested recursion $a_n = a_{n-a_{n-1}}$ has a solution, then it always eventually becomes periodic; in fact, this holds for any one-term homogeneous nested recursion. This paper, as well as \cite{Rpaper}, \cite{CeilFunSol}, and \cite{ConollyLike}, have exhibited large families of nested recursions with periodic difference sequences. In \cite{Golomb1990}, Golomb illustrated that with appropriate initial conditions Hofstadter's Q-recursion $Q(n) = Q(n-Q(n-1))+Q(n-Q(n-2))$ could be made to exhibit what he called ``quasi-periodic" behavior: more precisely, given initial conditions $Q(1) = 3, Q(2)=2$, and $Q(3)=1$, the resulting solution has $Q(3k+1)=3, Q(3k+2)=3k+2$ and $Q(3k)=3k-2$. Ruskey \cite{FibHof} has demonstrated similar behaviour involving the Q-recursion and the Fibonacci sequence.

All together, this suggests that ``periodic-like" behavior appears frequently in the solutions to nested recursions. Perhaps more such periodicity variants await discovery. Even further, perhaps some property, such as ``$a_n-a_{n-p}$ is periodic for some $p$", may unify the known examples and lead to a broader result about all such solution sequences.

\end{document}